\author{Tuomas Orponen}
\title{Quasisymmetric maps on Kakeya sets}
\address{University of Helsinki, Department of Mathematics and Statistics}
\subjclass[2010]{30L10 (Primary) 42B25, 28A78 (Secondary)}
\thanks{T.O. is supported by the Academy of Finland through the grant Restricted families of projections and connections to Kakeya type problems, grant number 274512.}
\email{tuomas.orponen@helsinki.fi}
\newcommand{\R}{\mathbb{R}}
\newcommand{\N}{\mathbb{N}}
\newcommand{\Z}{\mathbb{Z}}
\newcommand{\calH}{\mathcal{H}}
\newcommand{\Hd}{\dim_{\mathrm{H}}}
\newcommand{\diam}{\operatorname{diam}}
\numberwithin{equation}{section}
\theoremstyle{plain}
\newtheorem{thm}[equation]{Theorem}
\newtheorem{cor}[equation]{Corollary}
\newtheorem{proposition}[equation]{Proposition}
\theoremstyle{definition}
\newtheorem{definition}[equation]{Definition}
\theoremstyle{remark}
\begin{document}

\begin{abstract} I show that $L^{p}-L^{q}$ estimates for the Kakeya maximal function yield lower bounds for the conformal dimension of Kakeya sets, and upper bounds for how much quasisymmetries can increase the Hausdorff dimension of line segments inside Kakeya sets. 

Combining the known $L^{p}-L^{q}$ estimates of Wolff and Katz-Tao with the main result of the paper, the conformal dimension of Kakeya sets in $\R^{n}$ is at least $\max\{(n + 2)/2,(4n + 3)/7\}$. Moreover, if $f$ is a quasisymmetry from a Kakeya set $K \subset \R^{n}$ onto any at most $n$-dimensional metric space, the $f$-image of a.e. line segment inside $K$ has dimension at most $\min\{2n/(n + 2),7n/(4n + 3)\}$. The Kakeya maximal function conjecture implies that the bounds can be improved to $n$ and $1$, respectively. 
\end{abstract}

\maketitle

\section{Introduction}

A homeomorphism $g$ between two metric spaces $(X,d)$ and $(Y,d')$ is \emph{quasisymmetric}, if there exists an increasing homeomorphism $\eta \colon [0,\infty) \to [0,\infty)$ such that
\begin{displaymath} \frac{d'(g(x),g(y))}{d'(g(x),g(z))} \leq \eta\left(\frac{d(x,y)}{d(x,z)}\right) \end{displaymath}
for all triples of points $x,y,z \in X$ with $x \neq z$. A popular question in the study of these mappings is the following: given a specific metric space $(X,d)$ with Hausdorff dimension $\Hd X$, how much smaller can $\Hd Y = \Hd g(X)$ be, when $g \colon (X,d) \to (Y,d')$ is an arbitrary quasisymmetric homeomorphism? The infimum of these numbers $\Hd g(X)$ is known as the \emph{conformal dimension} of $X$. Quasisymmetric homeomorphisms are generally not bilipschitz-continuous, and indeed the dimension of $X$ alone warrants no non-trivial bounds for its conformal dimension. So, such bounds, if any, necessarily depend in a more delicate fashion on the structural properties of $X$. 

What kind of structural properties are relevant? A well-known principle is that strong connectivity of $X$ implies high conformal dimension. To make such a statement precise, one needs a way to quantify "strong connectivity", and this is often done via the concept of \emph{$p$-modulus}. Assume that $X$ supports a Borel measure $\mu$, and let $\Gamma$ be a family of curves in $X$. The $p$-modulus of $\Gamma$ with respect to $\mu$ is, by definition, the infimum of the numbers
\begin{displaymath} \int \rho^{p}(x) \ d\mu x, \end{displaymath}
where $\rho \colon X \to \R$ is any Borel function with the property that
\begin{displaymath} \int_{\gamma} \rho(x) \, d\calH^{1}(x) \geq 1 \end{displaymath}
for all the curves $\gamma \in \Gamma$. In short, the idea is that non-vanishing $p$-modulus for some $p > 0$ translates to connectivity within the support of $\mu$. For instance, observe that if $\calH^{1}(\gamma) \geq 1$ for all $\gamma \in \Gamma$, then a viable choice for $\rho$ is the characteristic function of the union $U_{\Gamma} := \bigcup_{\gamma \in \Gamma} \gamma$ (at least if $U_{\Gamma}$ happens to be Borel), so positive $p$-modulus for any $p > 0$ guarantees that $\mu(U_{\Gamma}) > 0$.

The following result of J. Tyson \cite{Ty} indicates that measuring "strong connectivity" in terms of positive modulus has merit:
\begin{thm}\label{tyson} Assume that $(X,d)$ is a compact $s$-Ahlfors-David regular metric space with $s > 1$. If $X$ contains a family of rectifiable curves with non-vanishing $s$-modulus, then the conformal dimension of $X$ is $s$.
\end{thm}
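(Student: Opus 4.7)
The bound $\operatorname{confdim}(X) \leq s$ is immediate, because the identity map is quasisymmetric and $\Hd X = s$ by Ahlfors-David regularity; the content of the theorem is the reverse inequality, namely that $\Hd g(X) \geq s$ for every quasisymmetric homeomorphism $g \colon (X,d) \to (Y,d')$. I plan to argue by contradiction: I will assume $\Hd g(X) < s$ and construct a sequence of test functions $\rho_{n} \colon X \to [0,\infty]$ that are admissible for a subfamily of $\Gamma$ of positive $s$-modulus, but whose $L^{s}(\mathcal{H}^{s})$-norms tend to zero. This contradicts the standing hypothesis.

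The construction starts from a near-optimal cover on the target side. Fixing any $t \in (\Hd g(X),s)$, I can cover $g(X)$ by balls $B(y_{i}^{(n)},r_{i}^{(n)})$ with $r_{i}^{(n)} \leq 1/n$ and $\sum_{i} (r_{i}^{(n)})^{t} \leq 1/n$. I pull this back to a cover of $X$ by the sets $A_{i}^{(n)} := g^{-1}(B(y_{i}^{(n)},r_{i}^{(n)}))$. Quasisymmetry combined with the compactness of $X$ yields a modulus-of-continuity estimate $\operatorname{diam}(A_{i}^{(n)}) \leq \Phi(r_{i}^{(n)})$ with $\Phi(r) \to 0$ as $r \to 0$, so in particular $\max_{i} \operatorname{diam}(A_{i}^{(n)}) \to 0$. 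The simplest candidate is
\[ \rho_{n} := \sum_{i} \frac{\chi_{A_{i}^{(n)}}}{\max_{j} \operatorname{diam}(A_{j}^{(n)})}, \]
for which $\int_{\gamma} \rho_{n}\, d\mathcal{H}^{1} \geq \mathcal{H}^{1}(\gamma)/\max_{j}\operatorname{diam}(A_{j}^{(n)}) \geq 1$ once $n$ is large, provided $\gamma$ has a definite minimal length. I plan to restrict $\Gamma$ to a subfamily $\Gamma_{L_{0}} := \{\gamma \in \Gamma : \mathcal{H}^{1}(\gamma) \geq L_{0}\}$ which, for $L_{0}$ small, still has positive $s$-modulus by the monotonicity of modulus under subfamily inclusion.

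The main obstacle is the $L^{s}$-norm estimate for $\rho_{n}$, since the crude bound
\[ \int_{X} \rho_{n}^{s}\, d\mathcal{H}^{s} \lesssim \sum_{i} \frac{\mathcal{H}^{s}(A_{i}^{(n)})}{\max_{j}\operatorname{diam}(A_{j}^{(n)})^{s}} \]
is comparable to the cardinality of the cover and need not tend to zero. The resolution is to refine the test function to $\rho_{n} = \sum_{i} \alpha_{i}^{(n)} \chi_{A_{i}^{(n)}}$ with weights $\alpha_{i}^{(n)}$ balancing $r_{i}^{(n)}$ against $\operatorname{diam}(A_{i}^{(n)})$, chosen so that an application of H\"older's inequality with a conjugate pair attached to the exponents $t$ and $s$ converts the smallness of $\sum_{i}(r_{i}^{(n)})^{t}$ into the smallness of $\int \rho_{n}^{s}\, d\mathcal{H}^{s}$. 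Concretely, I would use the $\eta$-quasisymmetry of $g$ together with Ahlfors regularity to quantify both $\operatorname{diam}(A_{i}^{(n)})$ and $\mathcal{H}^{s}(A_{i}^{(n)})$ in terms of $r_{i}^{(n)}$, and then choose the exponent in $\alpha_{i}^{(n)}$ so that the admissibility integral remains bounded below by a positive constant while the $L^{s}$ integral picks up an overall factor of $\sum_{i}(r_{i}^{(n)})^{t} \to 0$. The hypothesis $s > 1$ enters precisely at this point: the existence of rectifiable curves with positive $s$-modulus is meaningful only when $s > 1$, and the H\"older balance is available in the required range of exponents.
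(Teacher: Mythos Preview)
The paper does not prove Theorem~\ref{tyson}; it is quoted from Tyson and from Heinonen's book (Theorem~15.10 in \cite{He}), and the paper only \emph{adapts} that argument inside the proof of Theorem~\ref{main}. Your contradiction framework --- cover $g(X)$ efficiently, pull back, and build admissible test functions with vanishing $L^{s}$ norm --- is exactly the standard one, and matches what the paper does in its adaptation. The gap is in the execution of the weight choice.

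Your plan to ``quantify both $\diam(A_{i}^{(n)})$ and $\calH^{s}(A_{i}^{(n)})$ in terms of $r_{i}^{(n)}$'' does not work: quasisymmetry only yields H\"older bounds $r_{i}^{1/\alpha} \lesssim \diam A_{i} \lesssim r_{i}^{\alpha}$ for some $\alpha \in (0,1)$, so no single power of $r_{i}$ controls $\diam A_{i}$ uniformly in $i$, and no H\"older inequality between $t$ and $s$ is needed. The weight that makes the argument run --- and the one the paper uses (take $u=1$) in the proof of Theorem~\ref{main} --- keeps both quantities explicitly:
\[
\rho \;=\; \sum_{i} \frac{r_{i}}{d_{i}}\, \chi_{2QB_{i}},
\]
where $B_{i}$ is a ball in $X$ of diameter $d_{i}$ with $B_{i} \subset g^{-1}(B(y_{i},r_{i})) \subset QB_{i}$ (such a sandwich exists by quasisymmetry). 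Admissibility does not come from $\calH^{1}(\gamma)$ at all: if $B(y_{i},r_{i})$ meets $g(\gamma)$ then $\gamma$ meets $QB_{i}$, so $\calH^{1}(\gamma \cap 2QB_{i}) \gtrsim d_{i}$, whence
\[
\int_{\gamma} \rho \, d\calH^{1} \;\gtrsim\; \sum_{i \,:\, B(y_{i},r_{i}) \cap g(\gamma) \neq \emptyset} r_{i} \;\geq\; \calH^{1}_{\infty}(g(\gamma)) \;\geq\; \diam g(\gamma),
\]
which is bounded below on a positive-modulus subfamily of $\Gamma$. For the $L^{s}$ side, after a Vitali disjointification of the $B_{i}$, Ahlfors regularity and doubling (this is where $s>1$ is actually used; compare Exercise~2.10 in \cite{He} and line \eqref{form5} in the paper) give $\int_{X} \rho^{s}\, d\calH^{s} \lesssim \sum_{i} r_{i}^{s}$. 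Since one may take all $r_{i} \leq 1$ and $t < s$, this is at most $\sum_{i} r_{i}^{t} \to 0$, with no appeal to H\"older.
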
 
A simple proof for Tyson's theorem can also be found in Heinonen's book \cite{He}, Theorem 15.10. Further validation for the concept of modulus is given by a result of Keith and Laakso \cite{KL}, which can be seen as a partial converse to Theorem \ref{tyson}: under the same assumptions, if the conformal dimension of $X$ is $s$, then some weak tangent of $X$ has non-vanishing $s$-modulus (in fact, even non-vanishing $1$-modulus). 

So, without doubt, estimating the modulus of curve families is a useful, and natural, method for studying conformal dimension. There are a few drawbacks, however. First, the very definition of modulus assumes an \emph{a priori} measure $\mu$ to inhabit $X$, and most (to the best of my knowledge) available results moreover require $\mu$ to be somewhat regular, doubling at least. Second, even if such a measure is given, and it happens to be $s$-Ahfors-David regular (as in Theorem \ref{tyson}), non-vanishing $s$-modulus for some curve family contained in $X$ is a rather strong hypothesis. 

A natural class of examples to illustrate both caveats simultaneously is given by \emph{Kakeya sets} in $\R^{n}$. The most commonly used definition says that $K \subset \R^{n}$ is a Kakeya set, if it contains a unit line-segment of the form $L_{e}(x) = \{x + te : 0 \leq t \leq 1\}$ for all unit vectors $e \in S^{n - 1}$. For technical purposes, I prefer a slightly relaxed version:
\begin{definition}\label{kakeyaSets} A set $K \subset \R^{n}$, $n \geq 2$, is called a Kakeya set, if there exists a set $S \subset S^{n - 1}$ of positive $\calH^{n - 1}$-measure such that $K$ contains a unit line segment of the form $L_{e}(x)$, as above, for every $e \in S$. 
\end{definition}
Kakeya sets are of great interest in the field of Euclidean harmonic analysis, and determining their Hausdorff dimension is a major open problem. The \emph{Kakeya conjecture} states that the Hausdorff dimension of Kakeya sets in $\R^{n}$ should always be $n$, but this is only known in the plane. An excellent in-depth introduction to the world of Kakeya sets can be found in P. Mattila's new book \cite{Ma}. For the immediate purposes of this introduction, however, it suffices to know two things: First, Kakeya sets contain a natural, rich family of curves, namely the unit line segments $L_{e}(x)$, $e \in S$. Second, in all dimensions, there exist Kakeya sets of measure zero; the first construction of these rather counter-intuitive sets is due to Besicovitch \cite{Be} from as early as 1919. 

Now, suppose that one would like to estimate the conformal dimension of a Kakeya set $K$ by applying the modulus method to the natural family of curves $\Gamma := \{L_{e}(x)\}_{e \in S}$. Several problems appear. What is a natural measure supported on $K$? And even if, hypothetically, $K$ supports a regular $s$-dimensional measure $\mu$, is there hope that the family $\Gamma$ has positive $s$-modulus with respect to $\mu$? The prospects start to look rather depressing, when one realises that even the answer to the second question is negative: consider the Kakeya set $K$ formed by the unit line segments centred at the origin. Then $K$ is the closed ball $B(0,1/2)$, so the existence of a natural measure is not in doubt. But, in this case, it is well-known that the family $\Gamma$ has vanishing $n$-modulus, see Corollary 7.20 in \cite{He}. Of course, one could try to argue that this particular $K$ contains other curve families with non-vanishing $n$-modulus, but this line of thought has little hope when pitted against Besicovitch's example of a Kakeya set with zero volume. 

Here is the first main result of this paper:
\begin{thm}\label{mainDim} The conformal dimension of Kakeya sets in $\R^{n}$ is at least 
\begin{displaymath} \max\left\{\frac{n + 2}{2},\frac{4n + 3}{7} \right\}. \end{displaymath}
\end{thm}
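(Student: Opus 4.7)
The plan is to deduce Theorem \ref{mainDim} from a general reduction, to be established as the main technical result of the paper: every $L^{p}(\R^{n}) \to L^{q}(S^{n-1})$ estimate for the Kakeya maximal operator yields a lower bound on the conformal dimension of Kakeya sets equal to the lower bound it yields, via the standard argument, for the Hausdorff dimension. Applied to Wolff's $L^{(n+2)/2}$-estimate the reduction produces the exponent $(n+2)/2$; applied to the Katz--Tao estimate it produces $(4n+3)/7$; Theorem \ref{mainDim} is then the maximum of the two.

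To prove the reduction, fix an $\eta$-quasisymmetric homeomorphism $g \colon K \to (Y,d')$ and suppose, for contradiction, $\Hd Y < s - 2\varepsilon$ with $s$ the exponent furnished by the chosen estimate. Cover $Y$ by balls $B(y_{j}, r_{j})$ with $\sum_{j} r_{j}^{s - \varepsilon}$ arbitrarily small; for each $j$, let $x_{j} := g^{-1}(y_{j})$, pick $z_{j}$ with $d'(g(x_{j}), g(z_{j})) = r_{j}$, and set $\sigma_{j} := |x_{j} - z_{j}|$. The quasisymmetric inequality, applied to triples $(x_{j}, z, z_{j})$ with $z \in g^{-1}(B(y_{j}, r_{j}))$, gives $g^{-1}(B(y_{j}, r_{j})) \subset B(x_{j}, C_{\eta}\sigma_{j})$ and --- on a piece of $K$ of uniform distortion, produced by Tukia--V\"ais\"al\"a regularity --- a power-law comparison $r_{j} \gtrsim \sigma_{j}^{1/\alpha}$ with $\alpha$ arbitrarily close to $1$. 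A dyadic pigeonhole in $\sigma_{j}$ isolates a Euclidean scale $\sigma_{j} \sim \delta$ at which a positive $\calH^{n - 1}$-measure of directions $e \in S$ has the unit segment $L_{e}(x_{e}) \subset K$ covered in most of its length by the corresponding balls $B(x_{j}, \delta)$. Applying the chosen $L^{p} \to L^{q}$ Kakeya maximal inequality to the indicator of $E_{\delta} := \bigcup_{\sigma_{j} \sim \delta} B(x_{j}, \delta)$ lower-bounds its Lebesgue measure, hence the cardinality $N$ of the pigeonholed sub-cover, and combined with $\sum_{j} r_{j}^{s - \varepsilon} \ll 1$ and the power-law comparison yields the contradiction once $\alpha$ is close enough to $1$.

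The principal obstacle is the non-power-law character of the quasisymmetric distortion: the relationship between $r_{j}$ and $\sigma_{j}$ is given only through the abstract homeomorphism $\eta$, whose rate of change can be very irregular. I expect to handle this by a Whitney-style decomposition of $K$ into countably many pieces on each of which $g$ is effectively bi-H\"older with controlled exponent, running the covering--pigeonhole argument piece by piece at the cost of a further $\varepsilon$ loss in the exponent, and then summing. A secondary technical point is that the pigeonhole must retain a positive $\calH^{n - 1}$-measure of directions $e$ for which $L_{e}(x_{e})$ is substantially covered at the chosen scale $\delta$; this is verified by Fubini combined with Egorov applied to the a.e.\ uniform continuity of $g$ along segments of $K$.
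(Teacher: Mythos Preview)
Your overall architecture --- cover $g(K)$ efficiently, pull back, pigeonhole a Euclidean scale $\delta$, and feed the resulting configuration into the Kakeya maximal inequality --- matches the paper's. The gap is in how you close the numerology. You apply the maximal inequality to the \emph{indicator} of $E_{\delta}=\bigcup_{\sigma_{j}\sim\delta}B(x_{j},\delta)$, which yields (up to logs) a lower bound $N\gtrsim\delta^{-s+\epsilon}$ on the cardinality at that scale. To turn this into a contradiction with $\sum_{j}r_{j}^{s-\varepsilon}\ll 1$ you need a \emph{lower} bound on the image radii $r_{j}$, and you propose $r_{j}\gtrsim\sigma_{j}^{1/\alpha}$ with $\alpha$ ``arbitrarily close to $1$''. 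That is false: a general $\eta$-quasisymmetry (and its inverse) is only H\"older with an exponent determined by $\eta$, which can be far from $1$. Tukia--V\"ais\"al\"a does not improve this, and a Whitney decomposition into pieces of ``uniform distortion'' cannot either --- on each piece the best you can hope for is a \emph{fixed} power, not an arbitrarily good one. With the true exponent $\beta<1$ for $g^{-1}$ one gets $r_{j}\gtrsim\delta^{1/\beta}$, and then $\sum_{\sigma_{j}\sim\delta}r_{j}^{s-\varepsilon}\gtrsim\delta^{-s+\epsilon+(s-\varepsilon)/\beta}$, whose exponent is positive; no contradiction.

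The paper avoids this trap by testing the maximal inequality not against an indicator but against the \emph{weighted} function
\[
f=\sum_{j}\frac{(d_{j}^{X})^{u}}{d_{j}}\,\chi_{2QB_{j}},
\]
where $d_{j}^{X}=\diam B_{j}^{X}$ are the image-side diameters (here $u=1$). The weights are chosen so that $\int_{L_{e}}f\,d\calH^{1}\gtrsim\sum_{B_{j}^{X}\cap g(L_{e})\neq\emptyset}(d_{j}^{X})^{u}\geq\calH^{u}_{\infty}(g(L_{e}))\geq\tau$ automatically, and so that after pigeonholing and using disjointness one lands directly on $\int f_{k}^{s}\lesssim\delta^{n-s}\sum_{j\in N_{k}}(d_{j}^{X})^{su}=\delta^{n-s}\sum(d_{j}^{X})^{t}(d_{j}^{X})^{su-t}$. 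Now the only role of H\"older continuity is the \emph{upper} bound $d_{j}^{X}\lesssim\delta^{\gamma}$ for some fixed $\gamma>0$, and that merely contributes a factor $\delta^{\gamma(su-t)/s}$ which absorbs the $\delta^{-\epsilon}$ loss from the Kakeya inequality; any $\gamma>0$ suffices, since $su>t$. The paper gets this $\gamma$ not from a Whitney decomposition but from a short telescoping-chain argument exploiting that every point of $K$ lies on a unit segment inside $K$ (so one can climb scales by a fixed factor $p$ with controlled image distortion via Proposition~\ref{diamComparison}). Your Egorov step on ``a.e.\ uniform continuity along segments'' is likewise unnecessary once the weights are in place: the double pigeonhole (first in $e$, then in scale) is a straightforward Borel--Cantelli, exactly as in \eqref{form3}.
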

The bounds coincide for $n = 8$, and for $n \in \{3,4\}$, the bound equals the best known Hausdorff dimension estimate for Kakeya sets. After lamenting on the problems of using the modulus method in connection with Kakeya sets, it may seem strange to say that the proof of Theorem \ref{mainDim} is based on the modulus method. This is the case, however: the argument owes a great deal to that used for Theorem \ref{tyson}, as presented in Heinonen's book \cite{He}. The main new innovation, here, is the observation that a suitable $L^{p}-L^{p}$ inequality for the \emph{Kakeya maximal function}, defined shortly, can be inserted in the proof to replace the assumption of positive modulus. No \emph{a priori} measure on $K$ is needed, because these $L^{p}-L^{p}$ inequalities -- borrowed from the literature, see below -- are statements involving only Lebesgue measure in the ambient space. If a philosophical statement is permitted, one might say that the content of these $L^{p}-L^{p}$ inequalities is precisely that the modulus "vanishes quite slowly", and this is already sufficient to find a lower bound for conformal dimension.

The Kakeya maximal function $M_{\delta}$, with parameter $\delta > 0$, acts on locally integrable functions on $\R^{n}$ and produces functions defined on $S^{n - 1}$ through the equation
\begin{displaymath} (M_{\delta}f)(e) := \sup_{x \in \R^{n}} \frac{1}{|T^{\delta}_{e}(x)|} \int_{T_{e}^{\delta}(x)} |f(y)| \, dy. \end{displaymath}
Here $T_{e}^{\delta}(x)$ is a $\delta$-tube around the unit line segment $L_{e}(x) = \{x + te : 0 \leq t \leq 1\}$. It is well-known that $L^{p}-L^{q}$ estimates for $M_{\delta}$ guarantee lower bounds for the Hausdorff dimension of Kakeya sets, see Chapter 22 in Mattila's book \cite{Ma} for instance. Consequently, such estimates have been vigorously sought after for the past 20-30 years, at least. The main open problem in this setting is to verify the \emph{Kakeya maximal function conjecture}, which states that the inequality
\begin{equation}\label{kakeyaMaximal} \|M_{\delta}f\|_{L^{n}(S^{n - 1})} \leq C(\epsilon) \delta^{-\epsilon} \|f\|_{L^{n}(\R^{n})} \end{equation}
should hold for all $\epsilon > 0$ (with the constant $C(\epsilon)$ naturally blowing up as $\epsilon \to 0$; the constant also depends on $n$, but this number will be regarded as fixed and absolute throughout the paper). It is known that the Kakeya maximal function conjecture implies the Kakeya conjecture. This implication is not hard to prove as a stand-alone result, see Section 22.2 in \cite{Ma}, but it also follows by choosing $r = s = n$ and $u = 1$ in the second main theorem of the paper:
\begin{thm}\label{main} Let $u \geq 1$, $r > 0$ and $1 < s \leq n$. Assume that the Kakeya maximal function $M_{\delta}$ satisfies the estimate
\begin{equation}\label{aPriori} \|M_{\delta}f\|_{L^{r}(S^{n - 1})} \leq C(\epsilon)\delta^{-(n - s)/s - \epsilon}\|f\|_{L^{s}(\R^{n})} \end{equation}
for arbitrarily small $\epsilon > 0$, and with constants independent of $\delta > 0$. Then
\begin{displaymath} \Hd g(K) \geq su, \end{displaymath}
whenever $K \subset \R^{n}$ is a Kakeya set, and $g \colon K \to (X,d)$ is a quasisymmetric homeomorphism onto an arbitrary metric space $(X,d)$ with the property that 
\begin{equation}\label{lineDistortion} \calH^{n - 1}(\{e \in S^{n - 1} : \calH^{u}(g(L_{e}(x))) > 0\}) > 0. \end{equation}  \end{thm}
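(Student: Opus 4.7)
The argument follows the proof of Tyson's Theorem \ref{tyson} in Heinonen \cite{He} (Theorem 15.10), but substitutes the hypothesis of positive $s$-modulus with the Kakeya maximal estimate \eqref{aPriori}. Suppose toward contradiction that $\Hd g(K) < su$, and fix $\sigma$ with $\Hd g(K) < \sigma < su$. For any small $\epsilon_{0} > 0$ there is a cover of $g(K)$ by balls $\{B(y_{i}, r_{i})\}$ with $\sum r_{i}^{\sigma} < \epsilon_{0}$. Pull back to Euclidean space: let $A_{i} := g^{-1}(B(y_{i}, r_{i}) \cap g(K))$, $R_{i} := \diam A_{i}$, and fix a Euclidean ball $\tilde{B}_{i} \subset \R^{n}$ of radius $\asymp R_{i}$ containing $A_{i}$. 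By pigeonholing in \eqref{lineDistortion}, there are $c_{0} > 0$ and a subset $S' \subset S^{n-1}$ of positive $\calH^{n-1}$-measure on which $\calH^{u}(g(L_{e}(x_{e}))) \geq c_{0}$, so the Hausdorff $u$-content estimate applied to each $g(L_{e}(x_{e}))$ gives
\[ \sum_{i \colon A_{i} \cap L_{e}(x_{e}) \neq \emptyset} r_{i}^{u} \gtrsim c_{0}, \qquad e \in S'. \]

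A two-parameter dyadic pigeonhole over the scales of $r_{i}$ and $R_{i}$, together with a pigeonhole over $e \in S'$, identifies dominant scales $r_{i} \sim \delta$, $R_{i} \sim \rho$, and a positive-measure subset $S'' \subset S'$ on which the count
\[ N_{e} := |\{i : r_{i} \sim \delta, \; R_{i} \sim \rho, \; A_{i} \cap L_{e}(x_{e}) \neq \emptyset\}| \]
satisfies $N_{e} \gtrsim c_{0}\delta^{-u}/\log^{2}(1/\delta)$. Now test the Kakeya maximal inequality against $f := \mathbf{1}_{\bigcup_{i} \tilde{B}_{i}}$ (union over the pigeonholed $i$) at the Euclidean scale $\rho$: each tube $T_{e}^{\rho}(x_{e})$ meets each contributing $\tilde{B}_{i}$ in a set of volume $\asymp \rho^{n}$, whence $M_{\rho}f(e) \gtrsim \min(N_{e}\rho, 1)$. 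Combining with $\|f\|_{L^{s}(\R^{n})}^{s} \leq \mathcal{N}\rho^{n}$, where $\mathcal{N} \leq \epsilon_{0}\delta^{-\sigma}$ bounds the pigeonholed ball count, the estimate \eqref{aPriori} reduces, after the $\rho$-exponents cancel, to an inequality of the form
\[ \delta^{-(u - \sigma/s)} \lesssim (\log(1/\delta))^{O(1)} \rho^{-\epsilon} \epsilon_{0}^{1/s}. \]

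The main obstacle is handling the two scales $\delta$ (in the target $X$) and $\rho$ (in $\R^{n}$) for a general quasisymmetry, where these need not be polynomially coupled. This is resolved via the following standard fact: since $K$ is a compact, doubling subset of $\R^{n}$, any $\eta$-quasisymmetric map out of $K$ is H\"older continuous with some exponent $\alpha = \alpha(\eta, n) > 0$, forcing $\rho \gtrsim \delta^{1/\alpha}$. Inserting this into the previous display and choosing the Kakeya-maximal parameter $\epsilon < \alpha(u - \sigma/s)$, the left-hand side grows as $\delta^{-(u - \sigma/s) + \epsilon/\alpha} \to \infty$ when $\delta \to 0$; meanwhile, since $\mathcal{N} \geq N_{e}$ forces $\delta \to 0$ as $\epsilon_{0} \to 0$, the right-hand side tends to $0$ up to polylogarithmic factors, yielding the contradiction. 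The ``dense'' regime $N_{e}\rho > 1$, where the maximal function saturates at $1$, is handled by an analogous parallel computation.
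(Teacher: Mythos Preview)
Your overall strategy matches the paper's: cover $g(K)$ economically, pull the cover back to Euclidean balls, pigeonhole a scale, test the Kakeya maximal inequality \eqref{aPriori} against a suitable $f$, and close with the H\"older continuity of quasisymmetric maps out of doubling spaces. The differences are technical: you use the indicator $f = \mathbf{1}_{\bigcup_i \tilde B_i}$ together with a two-parameter pigeonhole on $(r_i,R_i)$, whereas the paper uses the weighted sum $f = \sum_j (d_j^X)^{u} d_j^{-1}\,\chi_{2QB_j}$ and pigeonholes only on the Euclidean scale $d_j$, the target scale being carried by the weights.

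There is, however, a concrete gap. The lower bound $M_{\rho}f(e) \gtrsim \min(N_e\rho,1)$ is not justified when $f$ is an indicator: the $N_e$ balls $\tilde B_i$ of radius $\sim \rho$ meeting $L_e$ may overlap arbitrarily, so $\bigl|\bigcup_i \tilde B_i \cap T_e^{\rho}\bigr|$ can be as small as $\sim \rho^{n}$, giving only $M_{\rho}f(e) \gtrsim \rho$ regardless of $N_e$. (Nothing prevents many cover balls $B(y_i,r_i)$ with $r_i \sim \delta$ from clustering in $X$, and even if they were disjoint there, the enclosing Euclidean balls $\tilde B_i$ could still coincide.) Replacing $f$ by $\sum_i \chi_{\tilde B_i}$ fixes the lower bound but leaves $\|f\|_{L^{s}}$ uncontrolled for the same reason. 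The paper resolves this by first running a Vitali argument \emph{in $\R^{n}$} to make the balls $B_j$ disjoint, then rebuilding the $X$-cover from the images $g(K \cap 5QB_j)$ via Proposition~\ref{diamComparison} so that the analogues of \eqref{form8}--\eqref{form1} persist. With disjointness in hand, the weighted test function yields $\int_{L_e} f \gtrsim \sum_j (d_j^X)^{u} \geq \calH^{u}_{\infty}(g(L_e))$ on one side and $\int f^{s} \sim \sum_j (d_j^X)^{su} d_j^{\,n-s}$ on the other, after which a single pigeonhole on the Euclidean scale suffices. Your scheme can be repaired along the same lines, but the disjointness step is essential and currently missing.
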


Here, and below $\calH^{u}$ stands for $u$-dimensional Hausdorff measure. Note that \eqref{lineDistortion} is an empty assumption for $u = 1$, since $\calH^{1}(g(L_{e}(x))) > 0$ for all $e \in S$. To deduce Theorem \ref{mainDim} from Theorem \ref{main}, it suffices to choose $u = 1$, and apply known bounds for the Kakeya maximal function, which happen to be of the form \eqref{aPriori}. Using the now famous "hairbrush construction", T. Wolff \cite{Wo} proved in 1995 that \eqref{aPriori} holds with 
\begin{displaymath} s = \frac{n + 2}{2} \quad \text{and} \quad r = (n - 1)s'= \frac{(n - 1)(n + 2)}{n}, \end{displaymath}
where $s' = s/(s - 1)$ is the dual exponent of $s$. Later, in 2002, N. Katz and T. Tao \cite{KT} employed methods from additive combinatorics to prove \eqref{aPriori} with 
\begin{displaymath} s = \frac{4n + 3}{7} \quad \text{and} \quad r = (n - 1)s' = \frac{4n + 3}{4}. \end{displaymath}
This numerology yields Theorem \ref{mainDim} immediately. Further, observing that the Kakeya maximal function conjecture is precisely \eqref{aPriori} with $r = s = n$, one obtains the following corollary:

\begin{cor} The Kakeya maximal function conjecture implies that the conformal dimension of Kakeya sets in $\R^{n}$ is $n$.
\end{cor}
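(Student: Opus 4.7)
The plan is to specialize Theorem~\ref{main} to $u = 1$ and $r = s = n$. First I would verify that the hypothesis \eqref{aPriori} reduces, in this case, exactly to the Kakeya maximal function conjecture \eqref{kakeyaMaximal}: with $s = n$ the exponent $-(n - s)/s - \epsilon$ collapses to $-\epsilon$, so \eqref{aPriori} becomes
\begin{displaymath}
\|M_{\delta} f\|_{L^{n}(S^{n - 1})} \leq C(\epsilon)\,\delta^{-\epsilon}\,\|f\|_{L^{n}(\R^{n})},
\end{displaymath}
which is precisely \eqref{kakeyaMaximal}. The remaining parameter restrictions of Theorem~\ref{main} are all met: $u = 1 \geq 1$, $r = n > 0$, and $1 < s = n \leq n$ since $n \geq 2$ in Definition~\ref{kakeyaSets}.

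Next I would invoke the remark following Theorem~\ref{main} to dispose of hypothesis \eqref{lineDistortion}: for $u = 1$ it is vacuous, because any quasisymmetric homeomorphism $g$ sends the line segment $L_{e}(x)$ to a non-degenerate continuum, whence $\calH^{1}(g(L_{e}(x))) \geq \diam g(L_{e}(x)) > 0$. Theorem~\ref{main} then gives $\Hd g(K) \geq su = n$ for every quasisymmetric homeomorphism $g$ defined on the Kakeya set $K$, so the conformal dimension of $K$ is at least $n$. For the matching upper bound, the identity $\mathrm{id} \colon K \to K \subset \R^{n}$ is a quasisymmetry, so the conformal dimension of $K$ is at most $\Hd K \leq n$. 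Combining the two bounds yields the corollary. There is no real obstacle here — the statement is a bookkeeping consequence of Theorem~\ref{main} — and the only subtlety worth double-checking is that the automatic validity of \eqref{lineDistortion} for $u = 1$ applies without modification to the arbitrary target space $(X,d)$ allowed in Theorem~\ref{main}, which it does.
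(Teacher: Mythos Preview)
Your proposal is correct and mirrors the paper's own argument: the corollary is stated in the paper immediately after the observation that the Kakeya maximal function conjecture is precisely \eqref{aPriori} with $r = s = n$, and the derivation via Theorem~\ref{main} with $u = 1$ (using that \eqref{lineDistortion} is then vacuous) is exactly what the paper intends. Your added remark about the identity map giving the matching upper bound $\leq n$ is a harmless completion that the paper leaves implicit.
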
 

The best known lower bound for the Hausdorff dimension of Kakeya sets in dimensions $n \geq 5$ is $(2 - \sqrt{2})(n - 4) + 3$, as shown by Katz and Tao \cite{KT}. I do not know if the same bound is valid for conformal dimension.

What is the motivation for the hypothesis \eqref{lineDistortion} for $u > 1$? So far the introduction has concentrated on explaining the problem of bounding conformal dimension from below, but in fact \eqref{lineDistortion} has more to do with another (closely related) line of research: proving that quasiconformal mappings do \textbf{not} increase the dimension of "most" curves. The seminal result in this direction is due to F. Gehring \cite{Ge,Ge2}, who showed that a quasiconformal self-map of $\R^{n}$ is absolutely continuous on almost all lines in $\R^{n}$: in particular, the dimension of the images of those lines is $1$. Since the 60's, Gehring's result has been improved and generalised by many: most developments are collected in the excellent introduction to the paper of Z. Balogh, P. Koskela and S. Rogovin \cite{BKR}, which also (to the best of my knowledge) contains the state-of-the-art result:
\begin{thm}[Balogh-Koskela-Rogovin, 2007]\label{BKR} Assume that $X,Y$ are locally $Q$-Ahlfors-David regular spaces with $Q > 1$. Suppose that $X$ is proper, and $f \colon X \to Y$ is a homeomorphism with uniformly bounded (lower) distortion almost everywhere, and finite (lower) distortion outside an exceptional set with $\sigma$-finite $(Q - 1)$-dimensional measure. Then $f$ is absolutely continuous on $1$-modulus almost every curve. 
\end{thm}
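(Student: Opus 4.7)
The plan is to show that for $1$-modulus almost every rectifiable curve $\gamma$ in $X$, the composition $f \circ \gamma$ is absolutely continuous. I would proceed in three stages: construct a pointwise upper-gradient-type function $\rho$ for $f$ off the exceptional set $E$, show $E$ is negligible for $1$-modulus, and combine the two to obtain absolute continuity along almost every curve.

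Stage one: on $X \setminus E$ define $\rho(x) := \limsup_{r \to 0} L_{f}(x,r)/r$, where $L_{f}(x,r) := \sup\{d'(f(x),f(y)) : d(x,y) \leq r\}$, and write $l_{f}(x,r)$ for the analogous infimum on the sphere of radius $r$. At a point of finite lower distortion, there is a sequence of scales along which $L_{f}(x,r) \leq H \cdot l_{f}(x,r)$, and the topological fact $B(f(x),l_{f}(x,r)) \subset f(B(x,r))$ together with $Q$-Ahlfors--David regularity of $Y$ yields $\calH^{Q}(f(B(x,r))) \gtrsim r^{Q}(L_{f}(x,r)/r)^{Q}/H^{Q}$ on these scales. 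A Vitali covering argument, together with injectivity of $f$ to transfer disjointness from $X$ to $Y$, then promotes this pointwise scale-dependent bound into the global estimate $\rho \in L^{Q}_{\mathrm{loc}}(X \setminus E)$, which in particular gives $\rho \in L^{1}_{\mathrm{loc}}$.

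Stage two: the family $\Gamma_{E}$ of rectifiable curves meeting $E$ in positive $\calH^{1}$-measure has vanishing $1$-modulus. The $\sigma$-finiteness of the $(Q - 1)$-dimensional Hausdorff measure of $E$ allows one to cover the relevant parts of $E$ by balls $B(z_{i},r_{i})$ with $\sum r_{i}^{Q - 1}$ arbitrarily small. The function $\rho_{\epsilon} := \sum_{i} r_{i}^{-1} \1_{B(z_{i},r_{i})}$ is then admissible for (a truncation of) $\Gamma_{E}$, since every curve spending positive $\calH^{1}$-length in $\bigcup B(z_{i},r_{i})$ integrates $\rho_{\epsilon}$ to at least a fixed positive constant. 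By $Q$-ADR of $X$,
\begin{displaymath} \int_{X} \rho_{\epsilon} \, d\calH^{Q} \lesssim \sum_{i} r_{i}^{-1} r_{i}^{Q} = \sum_{i} r_{i}^{Q - 1}, \end{displaymath}
which tends to zero with $\epsilon$.

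Stage three: for $\gamma \notin \Gamma_{E}$, a Fuglede-type selection lemma, combined with the scale-by-scale bound $L_{f}(x,r) \leq r \rho(x) + o(r)$ captured by the $\limsup$, gives for $1$-modulus almost every such $\gamma$ the upper-gradient inequality
\begin{displaymath} d'(f(\gamma(t)),f(\gamma(s))) \leq \int_{s}^{t} \rho(\gamma(\tau)) \, d\tau. \end{displaymath}
Local $\calH^{1}$-integrability of $\rho$ along $\gamma$, again by Fuglede and the fact that $\rho \in L^{Q}_{\mathrm{loc}} \subset L^{1}_{\mathrm{loc}}$, then forces $f \circ \gamma$ to be absolutely continuous. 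The main obstacle is stage one: passing from a one-sided, $\liminf$-based pointwise distortion bound to a quantitative $L^{Q}$ integrability of $\rho$ requires measurable selection of ``good scales'' interleaved with a Vitali covering that avoids overcounting volume in $Y$. This is the technical heart of the argument and relies crucially on both the $Q$-ADR assumption (for volume comparison between $X$ and $Y$) and the properness of $X$ (for local compactness in the covering lemma).
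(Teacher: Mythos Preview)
The paper does not prove Theorem~\ref{BKR}. It is quoted verbatim from Balogh--Koskela--Rogovin \cite{BKR} as background for the discussion of absolute continuity on curves, and the present paper offers no argument for it. There is therefore nothing here to compare your proposal against.

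As a brief remark on the proposal itself: the three-stage outline is the right shape, and Stage~2 (that a set of $\sigma$-finite $\calH^{Q-1}$-measure is negligible for $1$-modulus in a $Q$-regular space) is essentially correct. The real difficulty is the one you flag at the end but somewhat understate. You define $\rho(x) = \limsup_{r \to 0} L_{f}(x,r)/r$, yet the hypothesis only bounds the \emph{lower} distortion, i.e.\ controls $L_{f}(x,r)/l_{f}(x,r)$ along \emph{some} sequence $r_{i} \to 0$, not for all small $r$. So your volume comparison $\calH^{Q}(f(B(x,r))) \gtrsim (L_{f}(x,r)/H)^{Q}$ is only available on that sequence, and does not directly control the $\limsup$ defining $\rho$. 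The upper-gradient inequality in Stage~3 as written needs exactly the $\limsup$. Bridging this gap --- showing that a ``$\liminf$-type'' pointwise dilatation still serves as a weak upper gradient for modulus-a.e.\ curve --- is not a routine Vitali/selection argument but the substantive content of \cite{BKR}; it is not something one can wave through.
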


For the definitions, see \cite{BKR}. For the purposes of this introduction, it suffices to know that the assumptions on $f$ in Theorem \ref{BKR} are significantly weaker than quasisymmetry, but still of the same general flavour.

Now, suppose that one is interested in the behaviour of an arbitrary quasisymmetric homeomorphism on generic lines in a Kakeya set $K \subset \R^{n}$. As before, due to the lack of a natural regular measure on $K$, or the target space $(X,d)$, Theorem \ref{BKR} sheds no light on the situation. Also, absolute continuity is too much to ask for in this generality: replacing the Euclidean metric on $K$ by a slightly snow-flaked version, say $d$, one can arrange so that $\operatorname{Id} \colon K \to (K,d)$ is a quasisymmetric homeomorphism with $\Hd K = \Hd (K,d)$, but $\operatorname{Id} \colon K \to (K,d)$ is not absolutely continuous on any line in $K$. To be slightly more precise, assume that $K$ is contained in a ball $B$ of (Euclidean) diameter $5$. Inside $B$ (and on $K$ in particular), consider the metric $d$ defined by
\begin{displaymath} d(x,y) = |x - y| \cdot \ln \frac{100}{|x - y|}. \end{displaymath}
This is indeed a metric in $B$, since $\psi(t) = t \cdot \ln (100/t)$ is strictly increasing and concave on the interval $[0,10]$, and $\psi(0) = 0$. However, the mapping $\operatorname{Id} \colon K \to (K,d)$ is not absolute continuous on line segments, because line segments do not have $\sigma$-finite $\calH^{1}_{d}$-measure. So, in short: absolute continuity is such a delicate property that assuming some regularity from the domain and target, as in Theorem \ref{BKR}, seems practically necessary. 

The property of \emph{not increasing the dimension of generic lines (too much)} is weaker than \emph{being absolutely continuous on generic lines}, and hence, possibly, more easily achieved without so stringent regularity hypotheses. Indeed, in combination with the previously mentioned Kakeya maximal function estimates of Wolff and Katz-Tao, Theorem \ref{main} has the following corollary:
\begin{cor}\label{mainCor2} Assume that $K \subset \R^{n}$ is a Kakeya set, $(X,d)$ is a metric space with $\Hd X \leq n$, and $g \colon K \to (X,d)$ is a quasisymmetric homeomorphism. Then
\begin{displaymath} \Hd g(L_{e}(x)) \leq \min \left\{\frac{2n}{n + 2},\frac{7n}{4n + 3} \right\} \end{displaymath}
for $\calH^{n - 1}$-almost all line segments $L_{e}(x) \subset K$. \end{cor}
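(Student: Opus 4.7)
The plan is to deduce Corollary \ref{mainCor2} from Theorem \ref{main} by contraposition. Set $u_{0} := \min\{2n/(n+2), 7n/(4n+3)\}$, and assume toward a contradiction that the set
\begin{displaymath} E := \{e \in S : \Hd g(L_{e}(x)) > u_{0}\} \end{displaymath}
has $\calH^{n-1}(E) > 0$, where $S \subset S^{n-1}$ is the positive-measure direction set given by Definition \ref{kakeyaSets}. The goal will then be to produce a single exponent $u > u_{0}$ such that a positive-measure set of directions $e$ satisfies $\calH^{u}(g(L_{e}(x))) > 0$, so that one of Wolff's or Katz-Tao's maximal estimates plugged into Theorem \ref{main} forces $\Hd g(K) > n$, contradicting $\Hd X \leq n$.

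The first step is to extract this uniform $u$. Define $A_{u} := \{e \in S : \calH^{u}(g(L_{e}(x))) > 0\}$. The family $\{A_{u}\}$ is nested: $A_{u} \supset A_{u'}$ whenever $u \leq u'$, since $\calH^{u'}(Y) > 0$ implies $\calH^{u}(Y) = \infty$. Moreover, for each $e \in E$, the definition of Hausdorff dimension yields some $u_{e} > u_{0}$ with $\calH^{u_{e}}(g(L_{e}(x))) > 0$, so $E \subset \bigcup_{u > u_{0}} A_{u}$. By monotone convergence, $\calH^{n-1}(A_{u_{k}}) \nearrow \calH^{n-1}(\bigcup_{u > u_{0}} A_{u}) \geq \calH^{n-1}(E) > 0$ along any sequence $u_{k} \downarrow u_{0}$, so there exists some $u > u_{0}$ with $\calH^{n-1}(A_{u}) > 0$. (A small technical check here is that $e \mapsto \calH^{u}(g(L_{e}(x)))$ is measurable enough for $A_{u}$ to be measurable; this should follow from standard arguments once $e \mapsto L_{e}(x)$ is measurably selected, which is inherent to the setup of Theorem \ref{main}.)

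With such $u$ in hand, I split into cases according to which Kakeya maximal estimate gives the better $s$-value. For $n \leq 8$, one has $u_{0} = 2n/(n+2)$, and Wolff's estimate \eqref{aPriori} holds with $s = (n+2)/2$, so Theorem \ref{main} yields
\begin{displaymath} \Hd g(K) \geq su = \frac{n+2}{2} \cdot u > \frac{n+2}{2} \cdot \frac{2n}{n+2} = n. \end{displaymath}
For $n \geq 8$, instead $u_{0} = 7n/(4n+3)$, and the Katz-Tao estimate gives $s = (4n+3)/7$, so
\begin{displaymath} \Hd g(K) \geq \frac{4n+3}{7} \cdot u > \frac{4n+3}{7} \cdot \frac{7n}{4n+3} = n. \end{displaymath}
In either case $\Hd g(K) > n$. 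Since $g(K) \subset X$ and $\Hd X \leq n$, this is a contradiction.

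The only genuine content is the reduction to Theorem \ref{main}; the numerology is self-checking once $u > u_{0}$ is available. The main (mild) obstacle is the measurability/exhaustion step that produces this single $u$, but provided $A_{u}$ is $\calH^{n-1}$-measurable, the monotonicity of $A_{u}$ in $u$ renders the argument automatic. No other delicate issue arises: the hypothesis $\Hd X \leq n$ is only used at the very end to close the contradiction, and the quasisymmetry of $g$ is used exclusively through its role as the hypothesis of Theorem \ref{main}.
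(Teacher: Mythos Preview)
Your proposal is correct and follows the same route as the paper: argue by contraposition, extract a single $u > u_{0}$ for which \eqref{lineDistortion} holds, and then feed either the Wolff or the Katz--Tao $L^{s}$ bound into Theorem \ref{main} to force $\Hd g(K) > n$, contradicting $\Hd X \leq n$. The paper's proof compresses the extraction of the uniform $u$ into a single sentence (``If the corollary fails, then \eqref{lineDistortion} holds for some $u > \min\{\ldots\}$''), whereas you spell out the nested-family/monotone-convergence argument explicitly; otherwise the arguments are identical.
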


\begin{proof} If the corollary fails, then \eqref{lineDistortion} holds for some $u > \min\{2n/(n + 2),7n/(4n + 3)\}$. If for, instance, $u > 2n/(n + 2)$ one relies on Wolff's estimate, which says that \eqref{aPriori} holds with $s = (n + 2)/2$. Then, by Theorem \ref{main}, 
\begin{displaymath} \Hd X = \Hd g(K) \geq su  > \frac{n + 2}{2} \cdot \frac{2n}{n + 2} = n, \end{displaymath} 
which is a contradiction. If $u > 7n/(4n + 3)$, use the Katz-Tao bound instead. \end{proof}
As before, the Kakeya maximal function conjecture would give optimal results:
\begin{cor} With the same notation as above, The Kakeya maximal function conjecture implies that $\dim g(L_{e}(x)) = 1$ for almost all line segments $L_{e}(x) \subset K$. \end{cor}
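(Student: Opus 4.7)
The plan is to mimic the proof of Corollary \ref{mainCor2}, replacing the Wolff/Katz--Tao Kakeya maximal estimates by the conjectural optimal one. I would argue by contradiction: suppose the conclusion fails, so that the set $\{e \in S : \Hd g(L_{e}(x)) > 1\}$ has positive $\calH^{n-1}$-measure. Writing this set as the countable union of
\[ E_{k} := \{e \in S : \Hd g(L_{e}(x)) > 1 + 1/k\}, \qquad k \in \N, \]
countable subadditivity forces $\calH^{n-1}(E_{k}) > 0$ for some $k$. Set $u := 1 + 1/k > 1$. For every $e \in E_{k}$ one has $u < \Hd g(L_{e}(x))$, hence $\calH^{u}(g(L_{e}(x))) = \infty > 0$, so the hypothesis \eqref{lineDistortion} of Theorem \ref{main} is in force with this $u > 1$.

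Next, I would invoke the Kakeya maximal function conjecture, which is precisely the a priori bound \eqref{aPriori} at $r = s = n$. Feeding this into Theorem \ref{main} immediately yields
\[ \Hd g(K) \geq su = nu > n. \]
On the other hand $g(K) \subset X$, so $\Hd g(K) \leq \Hd X \leq n$, a contradiction.

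The scheme presents essentially no obstacle: once the uniform $u > 1$ is extracted from the almost-everywhere failure hypothesis, the argument reduces to a one-line application of Theorem \ref{main} at the conjectural optimal exponent $s = n$. The only minor subtlety is the passage from the pointwise inequality $\Hd g(L_{e}(x)) > 1$ to a uniform lower bound $u > 1$, handled by the countable-additivity pigeonholing above; this mirrors exactly the maneuver implicit in the proof of Corollary \ref{mainCor2}, where the conclusion $\Hd g(L_{e}(x)) \leq \min\{2n/(n+2), 7n/(4n+3)\}$ is derived by the same dichotomy against Wolff's and Katz--Tao's exponents.
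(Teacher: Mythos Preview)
Your argument is correct and follows exactly the approach implicit in the paper (which states this corollary without proof, relying on the reader to repeat the argument of Corollary~\ref{mainCor2} with $s=n$). The only point you leave tacit is the lower bound $\Hd g(L_{e}(x)) \geq 1$, but this is automatic: as noted just after Theorem~\ref{main}, $\calH^{1}(g(L_{e}(x))) > 0$ for every $e \in S$, since $g$ is a homeomorphism and $L_{e}(x)$ is a nondegenerate segment.
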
 

\section{Proof of the main theorem}

This section contains the proof of the technical main result, Theorem \ref{main}. The argument only uses very basic facts about quasisymmetric maps, most of which are easiest to verify on-the-go. The only fact worth citing from the literature at this point is the following inequality, see Proposition 10.8 in Heinonen's book \cite{He}:
\begin{proposition}\label{diamComparison} Assume that $g \colon (X,d) \to (Y,d')$ is a quasysymmetric embedding with associated homeomorphism $\eta \colon [0,\infty) \to [0,\infty)$. Then, for all sets $A \subset A' \subset X$ with $\diam A' < \infty$, it holds that $\diam g(A') < \infty$, and
\begin{displaymath} \frac{1}{2\eta\left(\frac{\diam A'}{\diam A}\right)} \leq \frac{\diam g(A)}{\diam g(A')} \leq \eta\left(\frac{2\diam A}{\diam A'} \right). \end{displaymath}
\end{proposition}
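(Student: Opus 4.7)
The plan is to translate the pointwise quasisymmetry condition into a statement about diameters of the sets $A$ and $A'$. The single crucial observation bridging the two is elementary: for any non-empty bounded set $S$ and any point $p \in S$, the triangle inequality gives $\sup_{q \in S} d(p, q) \geq \diam S / 2$. This lets us replace ``diameter of $S$'' by ``distance from $p$ to a cleverly chosen point of $S$'' at the cost of a factor of $2$. The case $\diam A = 0$ is trivial (then $\diam g(A) = 0$ too, so the upper bound holds because $\eta(0) = 0$, and the lower bound is vacuous); henceforth $\diam A > 0$. Finiteness of $\diam g(A')$ is verified by choosing once and for all $x_{0}, y_{0} \in A$ with $d(x_{0}, y_{0}) \geq \diam A / 2$ and running the quasisymmetry condition for a generic $z \in A'$ with center $x_{0}$: the right-hand side stays bounded by $d'(g(x_{0}), g(y_{0})) \cdot \eta(2 \diam A' / \diam A) < \infty$.

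For the upper bound, fix $x, y \in A$. The bridging observation, applied with $p = x \in A \subset A'$ and $S = A'$, produces for every $\epsilon > 0$ a point $z \in A'$ with $d(x, z) \geq \diam A' / 2 - \epsilon$. Applying quasisymmetry with center $x$ to the triple $(x, y, z)$ gives
\begin{displaymath}
d'(g(x), g(y)) \leq d'(g(x), g(z)) \cdot \eta\!\left(\frac{d(x, y)}{d(x, z)}\right) \leq \diam g(A') \cdot \eta\!\left(\frac{\diam A}{\diam A' / 2 - \epsilon}\right).
\end{displaymath}
Letting $\epsilon \to 0$ and taking the supremum over $x, y \in A$ produces $\diam g(A) \leq \diam g(A') \cdot \eta(2 \diam A / \diam A')$.

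For the lower bound the tactic is slightly different: a direct application of quasisymmetry with center $y \in A$ only controls the one-sided quantities $d'(g(y), g(x))$ for $x \in A'$, and we then pay a factor of $2$ via the triangle inequality to turn this into a two-sided bound on $\diam g(A')$. Concretely, choose $y, y' \in A$ with $d(y, y')$ arbitrarily close to $\diam A$. For every $x \in A'$, quasisymmetry with center $y$ yields
\begin{displaymath}
d'(g(y), g(x)) \leq d'(g(y), g(y')) \cdot \eta\!\left(\frac{d(y, x)}{d(y, y')}\right) \leq \diam g(A) \cdot \eta\!\left(\frac{\diam A'}{\diam A}\right).
\end{displaymath}
The triangle inequality then gives $d'(g(x), g(x')) \leq 2 \diam g(A) \cdot \eta(\diam A' / \diam A)$ for all $x, x' \in A'$, which rearranges to the stated lower bound.

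None of the individual steps is genuinely hard. The one point that requires attention is the combinatorial placement of the factors of $2$: they must sit inside $\eta$ for the upper bound (the auxiliary point is chosen in the larger set $A'$, and the $\diam A' / 2$ loss enters as the denominator inside the increasing function $\eta$) and outside $\eta$ for the lower bound (the auxiliary pair lies in $A$, which makes the quasisymmetry step clean, but the triangle inequality inflates the resulting estimate on $\diam g(A')$ by a factor of $2$).
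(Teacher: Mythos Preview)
Your proof is correct and is essentially the standard argument. Note, however, that the paper does not actually prove this proposition: it is quoted from the literature (Proposition 10.8 in Heinonen's book \cite{He}), so there is no in-paper proof to compare against. Your argument is the same as Heinonen's.
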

A word on notation: $B = B(x,r)$ always stands for a closed ball centred at $x$, with radius $r > 0$. The $\lambda$-enlargement of $B$ is denoted by $\lambda B := B(x,\lambda r)$. The inequality $A \lesssim_{p} B$ means that there is a constant $C = C(p) \geq 1$ such that $A \leq CB$. The two-sided inequality $A \lesssim_{p} B \lesssim_{q} A$ is shortened to $A \sim_{p,q} B$.

It remains to prove Theorem \ref{main}:
\begin{proof}[Proof of Theorem \ref{main}] Fix a quasisymmetric homeomorphism $g \colon K \to (X,d)$ with associated homeomorphism $\eta \colon [0,\infty) \to [0,\infty)$. Since every Kakeya set contains a bounded Kakeya set in the sense of Definition \ref{kakeyaSets}, one may assume that $K$ is bounded. Consequently, also $g(K)$ is bounded by Proposition \ref{diamComparison}. Further, one may assume that 
\begin{displaymath} \calH^{u}_{\infty}(g(L_{e})) \geq \tau > 0 \end{displaymath}
for all $e \in S$, where $\calH^{u}_{\infty}$ stands for $u$-dimensional Hausdorff content, and $S \subset S^{n - 1}$ and $L_{e} := L_{e}(x)$ are as in Definition \ref{kakeyaSets}: otherwise a Kakeya subset of $K$ has this property. Finally, one may assume that every point in $K$ is contained in one of the line segments in $S$, that is, $K = \bigcup_{e \in S} L_{e}$. 

Now, to reach a contradiction, suppose that $\Hd g(K) < t < s u$. Then, fix $\rho \in (0,1)$ to be specified later, and choose a sequence of balls $B_{1}^{X},B_{2}^{X},\ldots$ with diameters $d_{1}^{X},d_{2}^{X}\ldots$ such that
\begin{equation}\label{form8} g(K) \subset \bigcup_{j \in \N} B_{j}^{X}, \end{equation}
and
\begin{equation}\label{form7} \sum_{j \in \N} (d_{j}^{X})^{t} < \rho. \end{equation}
For some constant $Q \geq 1$ depending only on the quasisymmetry of $g$, one may choose balls $B_{1},B_{2},\ldots \subset \R^{n}$, centred at points in $K$ and with diameters $d_{1},d_{2},\ldots$ such that
\begin{equation}\label{form1} K \cap B_{j} \subset g^{-1}(B_{j}^{X}) \subset K \cap QB_{j}. \end{equation}
Since $\diam g^{-1}(B_{j}^{X}) \leq \diam K < \infty$, one may assume that $d_{j} \lesssim_{\diam K} 1$. Also, since each point in $K$ lies on a unit line segment contained in $K$, and the balls $B_{j}$ are centred on $K$, one has 
\begin{equation}\label{form9} \diam (K \cap B_{j}) \gtrsim \min\{d_{j},1\} \gtrsim d_{j}. \end{equation}
The latter implicit constants depend on $\diam K$, but this (as well as $\diam g(K)$) will be regarded as absolute constants. One may also assume that the balls $B_{j}$ are disjoint. Indeed, by a standard Vitali-type covering lemma, one can pick a disjoint sub-collection $\{QB_{i_{j}}\} \subset \{QB_{j}\}$ such that the union of the balls $5QB_{i_{j}}$ cover the union of the balls $QB_{j}$. Then, the images $g(K \cap 5QB_{i_{j}})$ cover $g(K)$, and $\diam g(K \cap 5QB_{i_{j}}) \lesssim d_{i_{j}}^{X}$, where the implicit constants only depend on the the quasisymmetry of $g$ and the diameter of $K$ (this follows from Proposition \ref{diamComparison} and \eqref{form9}).  One may now replace the balls $B_{i_{j}}^{X}$ by some balls $A_{i_{j}}^{X}$ containing $g(K \cap 5QB_{i_{j}})$, and with $\diam A_{i_{j}}^{X} \sim \diam g(K \cap 5QB_{i_{j}}) \lesssim d_{i_{j}}^{X}$. Then \eqref{form8} and \eqref{form7} hold for the balls $A_{i_{j}}^{X}$ and their diameters, and the analogue of \eqref{form1} is
\begin{displaymath} K \cap QB_{i_{j}} \subset g^{-1}(A_{i_{j}}^{X}) \subset K \cap Q'B_{i_{j}}, \end{displaymath}
where again the constant $Q' \geq 5Q$ only depends on the quasisymmetry of $g$. Now the balls $QB_{i_{j}}$ on the left hand side are disjoint, as desired, and the proof may continue.

Consider the function $f \colon \R^{n} \to \R$, defined by
\begin{displaymath} f := \sum_{j \in \N} \frac{(d_{j}^{X})^{u}}{d_{j}} \chi_{2QB_{j}}. \end{displaymath}
For any unit line segment $L_{e} \subset K$, $e \in S$, one has
\begin{displaymath} \int_{L_{e}} f(x) \, d\calH^{1}(x) = \sum_{j \in \N} \frac{(d_{j}^{X})^{u}}{d_{j}} \calH^{1}(L_{e} \cap 2QB_{j}) \geq \sum_{j : B_{j}^{X} \cap g(L_{e}) \neq \emptyset} \frac{(d_{j}^{X})^{u}}{d_{j}} \calH^{1}(L_{e} \cap 2QB_{j}). \end{displaymath}
If $B_{j}^{X} \cap g(L_{e}) \neq \emptyset$, then $L_{e} \cap QB_{j} \neq \emptyset$ by the right hand side inclusion of \eqref{form1}, and it follows that $\calH^{1}(L_{e} \cap 2QB_{j}) \gtrsim \min\{d_{j},1\} \gtrsim d_{j}$. Consequently,
\begin{equation}\label{form2} \int_{L_{e}} f(x) \, d\calH^{1}(x) \gtrsim \sum_{j : B_{j}^{X} \cap g(L_{e}) \neq \emptyset} (d_{j}^{X})^{u} \geq \calH^{u}_{\infty}(g(L_{e})) \geq \tau, \end{equation}
since the balls $B_{j}^{X}$ with $B_{j}^{X} \cap g(L_{e}) \neq \emptyset$ cover $g(L_{e})$.

Equation \eqref{form2} is next used to pigeonhole a suitable scale. For $k \in \Z$, define the index set 
\begin{displaymath} N_{k} := \{j \in \N : p^{-k - 1} \leq d_{j} < p^{-k} \text{ and } 2QB_{j} \cap L_{e} \neq \emptyset \text{ for some } e \in S\}, \end{displaymath}
where $p \in \N$ is an integer with the property that $\eta(1/p) \leq 1/2$, and observe that $N_{k}$ is empty for $k < k_{0}$, where $k_{0}$ depends only on $\max d_{j} \lesssim 1$. It follows from \eqref{form2} that for each $e \in S$, there is an index $k \geq k_{0}$ with
\begin{displaymath} \int_{L_{e}}  \sum_{j \in N_{k}} \frac{(d_{j}^{X})^{u}}{d_{j}} \chi_{2QB_{j}} \, d\calH^{1} \gtrsim \frac{1}{|k|^{2} + 1}. \end{displaymath}
Further, since this holds for all $e \in S$, the Borel-Cantelli lemma implies that one can choose the index $k \geq k_{0}$ so that
\begin{equation}\label{form3} \calH^{n - 1}\left(\left\{ e \in S : \int_{L_{e}}  \sum_{j \in N_{k}} \frac{(d_{j}^{X})^{u}}{d_{j}} \chi_{2QB_{j}} \, d\calH^{1} \gtrsim \frac{1}{|k|^{2} + 1} \right\} \right) \gtrsim \frac{\calH^{n - 1}(S)}{|k|^{2} + 1}. \end{equation}
Write $\delta := p^{-k - 1}$ for this $k \geq k_{0}$, and consider $M_{\delta}f_{k}$, where
\begin{displaymath} f_{k} := \sum_{j \in N_{k}} \frac{(d_{j}^{X})^{u}}{d_{j}} \chi_{4QB_{j}}. \end{displaymath}  
Recall that $T_{e}^{\delta}$ is the closed $\delta$-neighbourhood of the line segment $L_{e}$. Since
\begin{displaymath} \frac{1}{\calH^{n}(T_{e}^{\delta})} \int_{T_{e}^{\delta}} \chi_{4QB_{j}} \,d\calH^{n} \gtrsim \int_{L_{e}} \chi_{2QB_{j}} \, d\calH^{1} \end{displaymath}
for any $B_{j}$ with $\diam B_{j} \geq \delta$, one has $M_{\delta}f_{k}(e) \gtrsim 1/(|k|^{2} + 1)$ for all $e$ in the subset of $S$ defined in \eqref{form3}. Consequently, for any $\epsilon > 0$,
\begin{align} \left(\frac{\calH^{n - 1}(S)}{(|k|^{2} + 1)^{1 + r}}\right)^{1/r} & \lesssim \left( \int_{S^{n - 1}} [M_{\delta}f_{k}]^{r} \, d\calH^{n - 1} \right)^{1/r}\notag\\
&\label{form4} \leq C(\epsilon) \delta^{-(n - s)/s - \epsilon} \left( \int_{\R^{n}} f_{k}^{s} \, d\calH^{n} \right)^{1/s}, \end{align}
using the main hypothesis on the $L^{s}-L^{r}$ boundedness of $M_{\delta}$. Since $s > 1$, and the balls $B_{j}$ are disjoint, one can further estimate as follows:
\begin{align} \int_{\R^{n}} f_{k}^{s} \, d\calH^{n} & = \int_{\R^{n}} \left[ \sum_{j \in N_{k}} \frac{(d_{j}^{X})^{u}}{d_{j}}\chi_{4QB_{j}} \right]^{s} \, d\calH^{n} \notag\\
& \lesssim_{Q,n,s} \int_{\R^{n}} \left[ \sum_{j \in N_{k}} \frac{(d_{j}^{X})^{u}}{d_{j}}\chi_{B_{j}} \right]^{s} \, d\calH^{n} \notag\\
&\label{form5} \sim \delta^{n - s} \sum_{j \in N_{k}} (d_{j}^{X})^{su} = \delta^{n - s}\sum_{j \in N_{k}} (d_{j}^{X})^{t} \cdot (d_{j}^{X})^{su - t}. \end{align}
The inequality on the second line is standard, using the doubling of the measure $\calH^{n}$, and the fact that $s > 1$, see Exercise 2.10 in Heinonen's book \cite{He}.

There are now two cases: either $\delta$ is small or large. Specifically, assume first that $2Qp\delta = 2Qp^{-k} \geq 1/(100p)$. Then also $k \sim_{p,Q} 1$, and a combination of \eqref{form4} and \eqref{form5} with $\epsilon = 1/2$, say, gives 
\begin{displaymath} \calH^{n - 1}(S) \lesssim_{n,p,Q} \sum_{j \in N_{k}} (d_{j}^{X})^{t} < \rho. \end{displaymath}
Now, if $\rho > 0$ is small enough (depending on $n,p$, $\diam K$ and $Q$), this is impossible, so one may assume that $2Qp^{-k} \leq 1/(100 p)$ in the sequel.

To take advantage of the small factor $(d_{j}^{X})^{s u - t}$ in \eqref{form5} (recall that $su - t > 0$), one wishes to bound $d_{j}^{X}$ from above by some small power of $\delta$ for $j \in N_{k}$, that is, whenever $d_{j} \sim \delta = p^{-k - 1}$. Roughly speaking, this is possible because quasisymmetries are locally Hölder-continuous (with index depending only on the homeomorphism $\eta$). One could cite existing results here, but a self-contained proof is so simple that I give the details. Fix $B = B_{j}$ with $j \in N_{k}$, so that in particular $2QB \cap L_{e} \neq \emptyset$ for some $e \in S$. Then, choose an increasing sequence of concentric balls 
\begin{displaymath} B = B^{1} \subset B^{2} \subset \ldots \subset B^{N}, \end{displaymath}
where $QB^{N} \supset K$ with $\diam QB^{N} \sim \diam(K) \sim 1$, and 
\begin{equation}\label{form6} \frac{2\diam (K \cap QB^{i})}{\diam (K \cap QB^{i + 1})} \leq \frac{1}{p}. \end{equation}
Such balls are easy to construct: assume that $B^{i}$ has already been chosen so that $B^{i} \supset B$ and $\diam 2QB^{i} \leq 1/(100p)$; note that this is true for $B^{1} = B$, because $\diam B = \diam B_{j} = d_{j} \in [p^{-k - 1},p^{-k})$, and $2Qp^{-k} \leq 1/(100p)$. Since $2QB^{i} \cap L_{e} \neq \emptyset$ for some segment $L_{e} \subset K$, one has 
\begin{displaymath} \diam (K \cap 100pQB^{i}) \geq \diam (L_{e} \cap 100pQB^{i}) \geq 2p\diam (QB^{i}) \geq 2p \diam(K \cap QB^{i}), \end{displaymath}
and hence $B^{i + 1} := 100pQB^{i}$ satisfies \eqref{form6}. Continuing like this, after $N \gtrsim_{p} k$ steps, one reaches a stage, where $\diam 2QB^{N - 1} \leq 1/(100p)$, but $\diam  2QB^{N} > 1/(100p)$. Then, one enlarges $B^{N}$ a bit so that $QB^{N} \supset K$, and the definition of the sequence $(B^{i})_{i = 1}^{N}$ is complete. 

Now, recall that $d_{j}^{X} = \diam B_{j}^{X}$, where $B_{j}^{X}$ is the ball in $X$ associated to $B = B_{j}$. Then, since $B_{j}^{X} \subset g(K \cap QB)$ by \eqref{form1}, one has
\begin{displaymath} \frac{d_{j}^{X}}{\diam g(K)} \leq \frac{\diam g(K \cap QB)}{\diam g(K \cap QB^{N})} \leq \prod_{i = 1}^{N - 1} \frac{\diam g(K \cap QB^{i})}{\diam g(K \cap QB^{i + 1})}. \end{displaymath}
To estimate the product on the right hand side, one applies the right hand side inequality of Proposition \ref{diamComparison}, namely
\begin{displaymath} \frac{\diam g(A)}{\diam g(A')} \leq \eta\left(\frac{2\diam A}{\diam A'}\right)\quad \text{for } A \subset A' \subset \R^{n}. \end{displaymath}
In particular, by the defining property \eqref{form6} of the sequence $(B_{i})_{i = 1}^{N}$, one has
\begin{displaymath} \prod_{i = 1}^{N - 1} \frac{\diam g(K \cap QB^{i})}{\diam g(K \cap QB^{i + 1})} \leq \prod_{i = 1}^{N} \eta\left(\frac{2\diam (K \cap QB^{i})}{\diam (K \cap QB^{i + 1})}\right) \leq \eta\left(\frac{1}{p}\right)^{N - 1} \leq \left(\frac{1}{2}\right)^{N - 1}, \end{displaymath} 
recalling the choice of $p \in \N$. Since $N \gtrsim_{p} k$, one infers that 
\begin{displaymath} d_{j}^{X} \lesssim_{p} (1/2)^{k} = p^{-\gamma k} \sim_{p} \delta^{\gamma} \end{displaymath}
with $\gamma = \log_{p} 2 > 0$ (which is indeed a constant depending only on the function $\eta$). Finally, a combination of this estimate with \eqref{form4} and \eqref{form5} gives
\begin{displaymath} \left(\frac{\calH^{n - 1}(S)}{(|k|^{2} + 1)^{r + 1}}\right)^{1/r} \lesssim_{p} C(\epsilon) \delta^{\gamma(su - t)/s - \epsilon} \left[\sum_{j \in \N} (d_{j}^{X})^{t}\right]^{1/s} \leq C(\epsilon) \delta^{\gamma(su - t)/s - \epsilon}\rho^{1/s}.  \end{displaymath} 
To complete the proof, one chooses $\epsilon > 0$ so small that $\gamma(su - t)/s - \epsilon \geq \gamma(su - t)/(2s)$. Then, recalling that $\delta = p^{-k - 1}$, hence $k \sim \log_{p} (1/\delta)$, one ends up with
\begin{displaymath} \calH^{n - 1}(S) \lesssim_{p,\gamma,s,t,u} \delta^{\gamma r(su - t)/(2s)} (\log_{p} (1/\delta))^{r + 1} \rho^{r/s}. \end{displaymath}
Since the factor $\delta^{\gamma r(su - t)/(2s)} (\log_{p} (1/\delta))^{r + 1}$ is uniformly bounded for the relevant (that is: small) values of $\delta$, choosing $\rho > 0$ small enough produces a contradiction. The proof is complete.  
\end{proof}

\end{document}